\documentclass[article,12pt]{amsart}
\usepackage{amsfonts}
\usepackage{amsmath}
\usepackage{graphicx}
\usepackage{color}
\usepackage{epsfig}

\newcommand{\dC}{\mathbb{C}}

\newcommand{\dR}{\mathbb{R}}
\newcommand{\dE}{\mathbb{E}}
\newcommand{\dL}{\mathbb{L}}
\newcommand{\dP}{\mathbb{P}}

\newcommand{\cB}{\mathcal{B}}

\newcommand{\cF}{\mathcal{F}}
\newcommand{\cG}{\mathcal{G}}

\newcommand{\cN}{\mathcal{N}}
\newcommand{\cL}{\mathcal{L}}
\newcommand{\cR}{\mathcal{R}}

\newcommand{\ind}{\mbox{1}\kern-.25em \mbox{I}}

\def\build#1_#2^#3{\mathrel{\mathop{\kern 0pt#1}\limits_{#2}^{#3}}}

\def\videbox{\mathbin{\vbox{\hrule\hbox{\vrule height1ex \kern.5em
\vrule height1ex}\hrule}}}
\def\demend{\hfill $\videbox$\\}

\numberwithin{equation}{section}
\theoremstyle{plain}
\newtheorem{lem}{Lemma}[section]
\newtheorem{thm}{Theorem}[section]
\newtheorem{rem}{Remark}[section]

\newtheorem{defi}{Definition}[section]

\email{bernard.bercu@math.u-bordeaux.fr}
\email{fabrice.gamboa@math.univ-toulouse.fr}

\begin{document}
\title[On random Binomial-Fibonacci sequences]
{On random Binomial-Fibonacci sequences}
\author{Bernard Bercu}
\address{Universit\'e de Bordeaux, 
Institut de Math\'ematiques de Bordeaux,
UMR CNRS 5251, 351 cours de la lib\'eration, 
33405 Talence cedex, France.}

\author{Fabrice Gamboa}
\address{Universit\'e de Toulouse, Institut de Math\'ematiques de Toulouse, 31062 Toulouse Cedex 9, France.}

\begin{abstract}
Motivated by the pioneer work of Viswanath \cite{VIS2000} on random Fibonacci sequences and the development of random Euclidean addition chains
in cryptography, we introduce a new family of random Binomial-Fibonacci sequences. 
We study the asymptotic behavior of their finite-time Lyapunov exponents. In the special case of random Bernoulli-Fibonacci sequences, we
show that they convergence almost surely to an explicit Lyapunov exponent and prove a central limit theorem with an explicit formula for the variance. 
We also show that they satisfy a large deviation principle. 
\end{abstract}

\subjclass[2020]{37H15; 60F05; 60F10}
\keywords{Random Fibonacci sequence, Lyapunov exponent, almost sure convergence, central limit theorem, large deviations}

\maketitle

\vspace{-4ex}
\section{Introduction and Motivation}
\label{SectionIM}
Growth and decay of Fibonacci numbers is a very attractive field of research in arithmetic, combinatorics
and probability. While a wide literature is available on the beautiful world of Fibonacci sequences \cite{KOS2001}, 
very few references may be found on random Fibonacci sequences apart the remarkable papers of Viswanath \cite{VIS2000}, of Embree
and Trefethen \cite{ET1999}, and of Janvresse {\it et al} \cite{JRR2008, JRR2009, JRR2010}. 
Our motivation for investigating random Fibonacci sequences concerns two areas
of probability and cryptography. Fist of all, our goal is to introduce a new family of random Bernoulli-Fibonacci sequences for which 
it is possible to carry out a thorough analysis of its asymptotic behavior. Next, we hope it would be possible to make use of
our new family of random Bernoulli-Fibonacci sequences for generating random keys in cryptography 
\par
Our first motivation comes from the study of the asymptoptic behavior of random Fibonacci sequences.
They are given by the initial states $X_0=1, X_1=1$ and for all $n\geq 2$, by the recurrence relation
\begin{equation}
\label{RANDOMFIB}
X_{n+1}= \alpha \theta_{n+1} X_{n}+ \beta \xi_{n+1} X_{n-1}
\end{equation}
where $\alpha, \beta$ are positive real numbers and $(\theta_n)$ and $(\xi_n)$ are two independent sequences
of independent and identically distributed random variables with Rademacher $\cR(p)$ and $\cR(q)$ distributions
with $ p,q \in \,]0,1[$, which means that
\begin{equation*}
\theta_{n+1}= \left \{ \begin{array}{lll}
    \,\,1 & \text{ with proba. } \hspace{2ex} p, \vspace{1ex}\\
    \!\!-1 & \text{ with proba. } \hspace{2ex} 1-p,  \end{array}  \right.
    \qquad
\xi_{n+1}= \left \{ \begin{array}{lll}
    \,\,1& \text{ with proba. } \hspace{2ex} q, \vspace{1ex}\\
    \!\!-1 & \text{ with proba. } \hspace{2ex} 1-q.  \end{array}  \right.    
\end{equation*}
The pioneer work of Viswanath \cite{VIS2000}
was devoted to 
$\alpha=1, \beta=1$, $p=1/2, q=1$, 
\begin{equation}
\label{RANDOMVIS}
X_{n+1}= \theta_{n+1} X_{n}+ X_{n-1}.
\end{equation}
It was shown by Furstenberg and Kesten \cite{F1963, FK1960} that
\vspace{-1ex}
\begin{equation}
\label{LIMVIS}
\lim_{n\rightarrow \infty} \frac{1}{n}  \log|X_{n}|= \gamma \hspace{1cm} \text{a.s.}
\vspace{-1ex}
\end{equation}
where the Lyapunov exponent $\gamma$ was later called Viswanath's constant. The determination of the limiting
value $\gamma$ relies on Furstenberg's formula, 
\begin{equation}
\label{FURFORM}
\gamma=\frac{1}{2}\int_0^\infty \log \Bigl( \frac{1+4x^4}{(1+x^2)^2} \Bigr) \,d\nu(x)
\end{equation}
where $\nu$ is a rather complicated fractal probability measure defined inductively on
Stern-Brocot intervals. 
Some extension for $\alpha=1, \beta=1$, and $p\neq 1/2$ may be found
in \cite{JRR2008} and for $\alpha >0$ in \cite{JRR2009, JRR2010, SIR2001}. 
Viswanath was only able to calculate eight decimal places of $\gamma$, and it is quite difficult to go any further 
in calculating the remaining decimal places \cite{BAI2007, BAT2002}. 
McLellan \cite{MCL2013} tried to shed light on certain properties of Viswanath's constant, but as she pointed out, we do not even know if 
$\gamma$ is irrational or transcendental. Here,
our primary goal is to introduce a new family of random Bernoulli-Fibonacci sequences for which it is possible 
to provide an explicit and easily computable form of their asymptotic Lyapunov exponent.
\par
Our second motivation for studying such sequences is related to Euclidean addition chains  \cite{BRA1939}, \cite{MT2011}
which were introduced to generate random cryptographic keys that are resistant to power analysis attacks.
An Euclidean addition chain of length $\ell$, leading
to an integer $a$, is a sequence $(\theta_1, \theta_2, \ldots, \theta_\ell) \in \{0,1\}^\ell$ associated with
$(X_1, X_2, \ldots, X_\ell)$ such that the initial states $X_0=X_1=1, X_2=2$ and $\theta_1=1, \theta_2=2$,
the final state $X_\ell\!=\!a$, and the intermediate states are given, for $2 \leq n \leq \ell-1$, by 
\begin{equation}
\label{EAC}
X_{n+1}=  \theta_{n+1}(X_{n}+X_{n-1})+(1- \theta_{n+1}) (X_{n}+X_{k} )
\end{equation}
where $k<n-1$ is such that $X_{n}=X_{n-1}+X_{k}$. The case $\theta_{n+1}=1$ corresponds to the big step 
because we add the biggest of the two possible integers to $X_{n}$, while the case $\theta_{n+1}=0$ is the small step
as we add the smallest one \cite{HV2010}, \cite{MEL2007}. For example, from the Euclidean addition chain 
$(1, 1, 0,0,1,0,1,1,1,0)$ of length $\ell=10$, one can compute the integer $a=57$ via
$(1,2,3,4,7,9,16,25,41,57)$.
Several algorithms to encrypt and decrypt a message by use of random Euclidean addition chains were
recently proposed by Herbaut {\it et al}  \cite{HLMTV2010, HV2010}. Hence, the study of the asymptotic behavior of the
random Bernoulli-Fibonacci sequences will enlighten the family of random Euclidean addition chains
in cryptography.
\par
The paper is organized as follows. Section 2 is devoted to the introduction 
of the new family of random Binomial-Fibonacci sequences. In Section 3, 
we show the almost sure convergence of their finite-time Lyapunov exponents and prove a central limit theorem.
In the special case of random Bernoulli-Fibonacci sequences, we provide an explicit formula for the Lyapunov exponent
as well as for the asymptotic variance.
We also show that their finite-time Lyapunov exponents satisfy a large deviation principle. 
All technical proofs are postponed in the Appendices.

\section{Random Binomial-Fibonacci sequences}
\label{SectionBF}

\begin{defi}
We shall say that $(X_n)$ is random Binomial-Fibonacci $\cB\cF(N,p)$ sequence with parameters $N\geq 1$
and $0< p< 1$ if $X_0=1, X_1=1$ and for all $1\leq n \leq N$, $X_{n+1}= X_{n}+ X_{n-1}$, whereas
for all $n \geq N+1$,
\begin{equation}
\label{BFRAND}
X_{n+1}=  X_{n}+ X_{n+\theta_{n+1}-N}
\end{equation}
where $\theta_{n+1}$ stands for a Binomial $\cB(N,p)$ random variable, independent of $X_{1},\ldots, X_{n}$,
which means that $\theta_{n+1}$ takes the values $\{0,1,\ldots,N\}$ and for all $0\leq k\leq N$, 
\begin{equation*}
\dP(\theta_{n+1}=k) =C_N^k p^k (1-p)^{N-k}.
\end{equation*}
\end{defi}
\par
For example, the random Bernoulli-Fibonacci $\cB\cF(1,p)=\cB\cF(p)$ sequence is 
given by $X_0=1, X_1=1, X_2=2$ and for all $n \geq 2$,
\begin{equation}
\label{BERFRAND1}
X_{n+1}=  X_{n}+ X_{n+\theta_{n+1}-1}
\end{equation}
where $\theta_{n+1}$ is a Bernoulli random variable with $\cB(p)$ distribution. On can observe that \eqref{BERFRAND1} can be rewritten,
for all $n \geq 2$, as
\begin{equation}
\label{BERFRAND2}
X_{n+1}=  (1+\theta_{n+1}) X_{n}+ (1-\theta_{n+1}) X_{n-1}.
\end{equation}
The crucial difference between \eqref{EAC} or \eqref{RANDOMFIB} and \eqref{BERFRAND2}  is that
we allow super big step which means that, with probability $p$, we could add $X_n$ to itself.
Another example is the random Binomial-Fibonacci $\cB\cF(2,p)$ sequence
given by $X_0=1, X_1=1, X_2=2$, $X_3=3$ and for all $n \geq 3$,
\begin{equation}
\label{BINFRAND2-1}
X_{n+1}=  X_{n}+ X_{n+\theta_{n+1}-2}
\end{equation}
where $\theta_{n+1}$ is a Binomial random variable with $\cB(2,p)$ distribution. One can easily see from \eqref{BINFRAND2-1}  
that for all $n \geq 3$, 
\begin{align}
\label{BINFRAND2-2}
X_{n+1} &= \Bigl(1+\frac{1}{2}\theta_{n+1}(\theta_{n+1}-1)\Bigr) X_{n}+\theta_{n+1}(2-\theta_{n+1}) X_{n-1} \nonumber \\
&+ \frac{1}{2} (1-\theta_{n+1})(2-\theta_{n+1}) X_{n-2}.
\end{align}

\noindent
The key point in our analysis of the asymptotic behavior of $(X_n)$ 
is to make use of the Riccati ratio $R_n$ defined, for all $n\geq 1$, by
\begin{equation}
\label{RICCATI}
R_{n}=  \frac{X_{n}}{X_{n-1}}.
\end{equation}
Actually, the finite-time Lyapunov exponent associated with $(X_n)$ is given by
\begin{equation}
\label{EQLOG}
\frac{1}{n}\log (X_{n})= \frac{1}{n}\sum_{k=1}^n \log (R_{k}).
\end{equation}
Our goal is to provide a complete analysis of the asymptotic behavior of the finite-time Lyapunov exponent associated with
Binomial-Fibonacci $\cB\cF(N,p)$ sequences with as special focus on Bernoulli-Fibonacci $\cB\cF(p)$ sequences.

\section{Main results}
\label{SectionMR}

Our first result is devoted to the almost sure convergence of the finite-time Lyapunov exponent of random Binomial-Fibonacci $\cB\cF(N,p)$ sequences. In all the sequel, $(F_k)$ is the classical Fibonacci sequence and $G$ stands for a random variable with Geometric $\cG(p)$ distribution, that is for all $k\geq 0$, $\dP(G=k) = p (1-p)^{k}$.
\begin{thm}
\label{T-ASCVGBERF}
Assume that $(X_n)$ is a random Binomial-Fibonacci $\cB\cF(N,p)$ sequence with parameters $N \geq 1$ and $0< p < 1$. Then, we have 
\begin{equation}
\label{BERFASCVG}
\lim_{n\rightarrow \infty} \frac{1}{n}  \log (X_{n})= L(p) \hspace{1cm} \text{a.s.}
\end{equation}
where the Lyapunov exponent $L(p)$ is finite. This convergence also holds in $\dL^1$. Moreover, if $(X_n)$ is a 
Bernoulli-Fibonacci $\cB\cF(p)$ sequence, $L(p)$ is given by
\begin{equation}
\label{LYAP}
L(p)=\dE\left[\log \!\left( \frac{F_{G+3}}{F_{G+2}} \right)\right]=\sum_{k=0}^\infty p (1-p)^{k} \log \!\left( \frac{F_{k+3}}{F_{k+2}} \right).
\end{equation}
\end{thm}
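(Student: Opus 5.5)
We plan to treat the general case through products of i.i.d.\ random matrices, and then handle $N=1$ by a direct renewal argument that yields the explicit formula \eqref{LYAP}.

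\textbf{General case.} Set $V_n=(X_n,X_{n-1},\ldots,X_{n-N})^T$. By \eqref{BFRAND}, $V_{n+1}=A_{n+1}V_n$, where $A_{n+1}$ is the companion-type matrix whose first row is $e_1+e_{N+1-\theta_{n+1}}$ (with ones on the subdiagonal). The matrices $A_n$ are i.i.d., take finitely many values, and have nonnegative integer entries. In particular $\log\|A_n\|$ is bounded, so $\dE[\log^+\|A_1\|]<\infty$ holds trivially. The Furstenberg--Kesten theorem (subadditive ergodic theorem) gives $\frac1n\log\|A_n\cdots A_{N+1}\|\to L(p)$ a.s.\ and in $\dL^1$. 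We then transfer this to $X_n$. Since $X_n$ is nondecreasing with $X_{n+1}\le 2X_n$, all coordinates of $V_n$ lie between $2^{-N}X_n$ and $X_n$. All matrices and $V_{N+1}$ are nonnegative, and $V_{N+1}$ has positive entries, so $X_n$ is comparable, up to constants, to $\|V_n\|=\|A_n\cdots A_{N+1}V_{N+1}\|$.

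To compare this with the matrix norm, we bound $\|A_n\cdots A_{N+1}\|$ by the product norm times a fixed constant. This uses the fact that for nonnegative matrices, $\|M\|\le c\,\|Mv\|$ whenever $v$ has entries bounded below, with $c$ depending only on $\min_i v_i$. This gives \eqref{BERFASCVG}. Finiteness of $L(p)$ follows from $1\le R_k\le 2$, which shows $0\le L(p)\le\log 2$. The $\dL^1$ convergence follows by dominated convergence, since $0\le\frac1n\log X_n\le\log 2$.

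\textbf{Bernoulli case.} We use the Riccati ratio. From \eqref{BERFRAND2}, $R_{n+1}=2$ if $\theta_{n+1}=1$, and $R_{n+1}=1+1/R_n$ if $\theta_{n+1}=0$. The key observation is that the chain forgets its past at every super big step. After a time with $\theta=1$ we have $R=2=F_3/F_2$, and after $j$ further zeros, $R=F_{j+3}/F_{j+2}$, by induction using $1+F_{j+2}/F_{j+3}=F_{j+4}/F_{j+3}$. Hence, after the first success time $\tau$, which is a.s.\ finite, $R_n=F_{G_n+3}/F_{G_n+2}$, where $G_n$ is the number of zeros since the last one. The process $(G_n)$ is a Markov chain on $\mathbb{N}$ that moves to $0$ with probability $p$ and to $G+1$ with probability $1-p$. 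It is positive recurrent, with stationary law $\cG(p)$ since $\pi(k)=p(1-p)^k$ solves the balance equations. The ergodic theorem for Markov chains (or renewal-reward applied to the i.i.d.\ cycles between successes) then gives $\frac1n\sum_k\log R_k\to \dE[\log(F_{G+3}/F_{G+2})]$ a.s. The finitely many terms before $\tau$ contribute $O(\tau/n)\to0$ since each $\log R_k\le\log2$. Combined with \eqref{EQLOG}, this yields \eqref{LYAP}.

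\textbf{Main obstacle.} The main obstacle is the comparison between $X_n$ and the norm of the matrix product in the general case. It requires a careful positivity argument, since the matrices are not strictly positive. Alternatively, that step can be bypassed by applying Kingman's theorem directly to $\log X_{n}$ shifted along the i.i.d.\ driving sequence, using near-subadditivity up to a bounded constant.
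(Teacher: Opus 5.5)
Your proof is correct. For general $N$ it follows the paper's route: Furstenberg--Kesten for the i.i.d.\ companion matrices, plus a two-sided comparison of $X_n$ with $\|P_n\|$.

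The step you flag as the main obstacle is already closed by the inequality you quote. For entrywise nonnegative $M$ and $v_i\ge m>0$ one has $\|M\|_\infty\le\sum_{i,j}M_{ij}\le \|Mv\|_1/m$, so no strict positivity is needed. There is one harmless index slip: the product should start at $A_{N+2}$, since $\theta_{N+2}$ drives the first random step. Your $\dL^1$ argument, by bounded convergence from $0\le\frac1n\log X_n\le\log 2$, is more direct than the paper's appeal to the $\dL^1$ part of Furstenberg--Kesten for matrix norms.

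The Bernoulli case is where you genuinely differ. The paper treats $(R_n)$ as iterated random Lipschitz maps on $[3/2,2]$ and invokes Diaconis--Freedman to get a geometrically ergodic chain. It then identifies the stationary law as that of $F_{G+3}/F_{G+2}$ essentially by inspection. You instead observe that $R_n=F_{G_n+3}/F_{G_n+2}$ is a deterministic function of the age process $G_n$. That process is an irreducible positive recurrent chain on $\mathbb{N}$ whose stationary law is visibly $\cG(p)$, so the countable-state ergodic theorem (or renewal--reward over the i.i.d.\ cycles) gives \eqref{LYAP} directly.

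Your version is more elementary, and it actually proves the identification of the stationary law that the paper only states. Its renewal--reward form $L(p)=p\,\dE[\log F_{G+3}]$ is exactly the structure exploited in Remark~\ref{R-LDP} and Lemma~\ref{L-LAPLACE}. The paper's route buys geometric ergodicity of $(R_n)$, which is more than Theorem~\ref{T-ASCVGBERF} needs.

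Finally, $R_2=2$ already, so $G_n$ is well defined from $n=2$ on and your pre-$\tau$ correction is unnecessary, though harmless.
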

\noindent{\bf Proof.}
The proof is given in Appendix\,A. \demend
\vspace{-3ex}
\begin{rem} 
\label{R-ASCVG}
Denote by $\varphi$ the golden ratio, $\psi=-\varphi^{-1}$ and $r=\psi \varphi^{-1}=-\varphi^{-2}$.
It follows from Binet's formula \cite{KOS2001} that for all $k\geq 0$,
$$
F_k=\frac{\varphi^k - \psi^k}{\sqrt{5}}
\qquad \text{and} \qquad
\frac{F_{k+3}}{F_{k+2}}=\varphi \left(\frac{1-r^{k+3}}{1-r^{k+2}}\right).
$$
Hence, if $(X_n)$ is a Bernoulli-Fibonacci $\cB\cF(p)$ sequence, $L(p)$ can be rewritten as
\begin{equation*}
L(p)=\log(\varphi) + \sum_{k=1}^\infty \frac{pr^{2k} (1-r^{k})}{k\big(1-(1-p)r^k\big)}.
\end{equation*}
Since $r^2<0.146$, this series converges exponentially fast, which means that we can compute the value of $L(p)$ very precisely. However,
as for Viswanath's constant $\gamma$, we do not know whether $L(p)$ is irrational or transcendental.
\end{rem}

\begin{figure}[ht]
\vspace{-4.4cm}
\begin{center}
\includegraphics[scale=0.44]{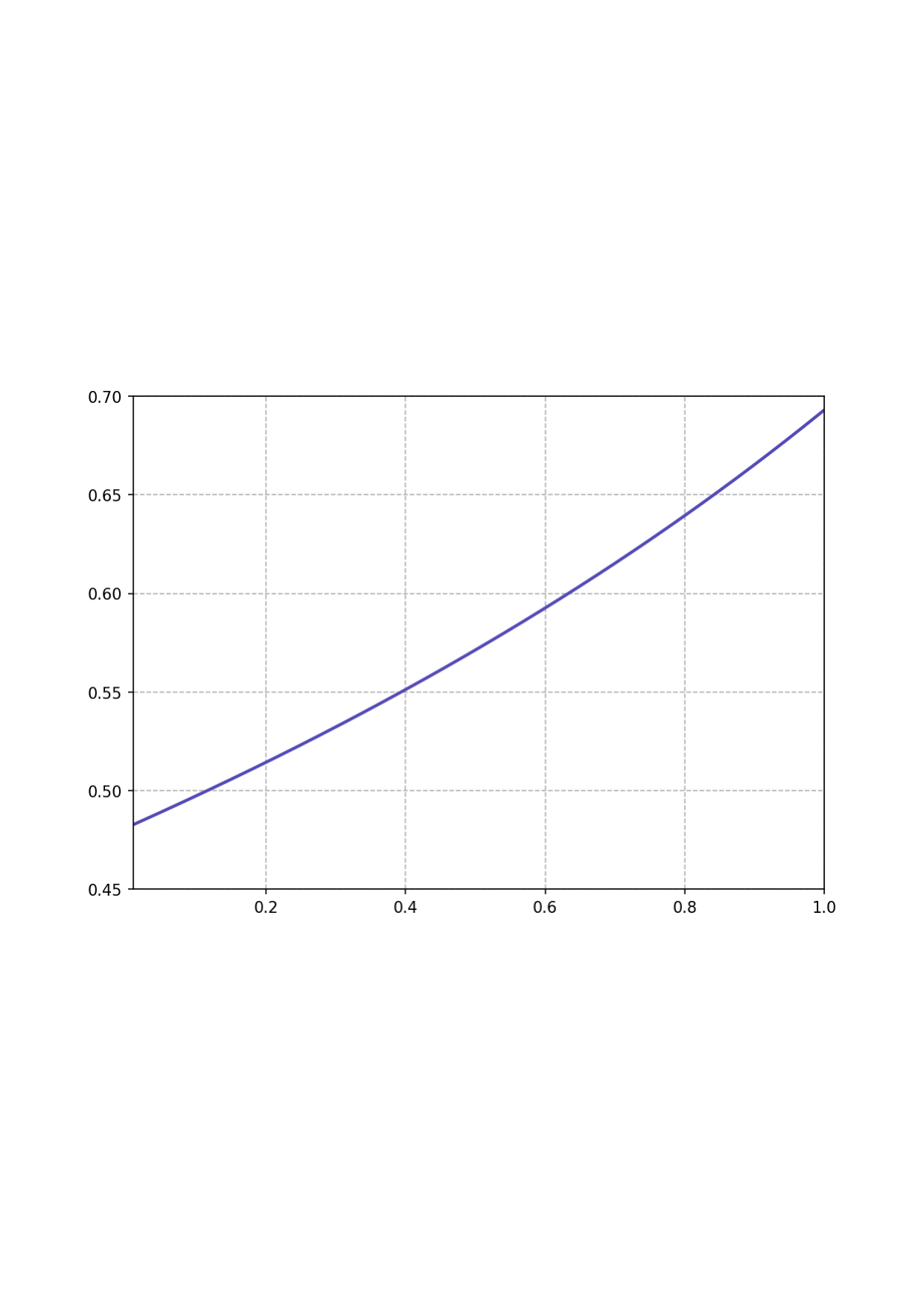}
\vspace{-4.2cm}
\caption{Variation of the Lyapunov exponent $L(p)$ given by \eqref{LYAP} with respect to $p$, going from $L(0)=\log(\varphi)$ to $L(1)=\log(2)$.}
\label{Fig-BFLp}
\end{center}
\end{figure}

Our next result concerns the central limit theorem for the finite-time Lyapunov exponent of random Binomial-Fibonacci $\cB\cF(N,p)$ sequences.

\begin{thm}
\label{T-ANBERF}
Assume that $(X_n)$ is a random Binomial-Fibonacci $\cB\cF(N,p)$ sequence with parameters $N \geq 1$ and $0< p < 1$. Then, we have the central limit theorem
\begin{equation}
\label{BERFAN}
\frac{\log (X_{n}) - n L(p)}{\sqrt{n}}   \underset{n\rightarrow+\infty}{\overset{\cL}{\rightarrow}} \cN\big(0, \Gamma(p)\big).
\end{equation}
Moreover, if $(X_n)$ is a 
Bernoulli-Fibonacci $\cB\cF(p)$ sequence, the asymptotic variance $\Gamma(p)$ is given by $\Gamma(p)=\sigma^2(p)+2\tau^2(p)$ with
\begin{equation}
\label{AVAR1}
\sigma^2(p)=\text{Var}\left(\!\log \!\left( \frac{F_{G+3}}{F_{G+2}} \right)\!\right)=\sum_{k=0}^\infty p (1-p)^{k} \log \!\left( \frac{F_{k+3}}{F_{k+2}} \right)^2 - L^2(p)
\end{equation}
and
\begin{equation}
\label{AVAR2}
\tau^2(p)=\sum_{\ell=1}^\infty \text{Cov}\left(\!\log \!\left( \frac{F_{G+3}}{F_{G+2}} \right) \!, \log \!\left( \frac{F_{G+\ell+3}}{F_{G+\ell+2}} \right)
\!\right).
\end{equation}
\end{thm}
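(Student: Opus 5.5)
The plan is to represent $\log(X_n)$ as an additive functional of a Markov chain that has an accessible atom, and then to apply the regenerative (Markov chain) central limit theorem. For the Bernoulli-Fibonacci $\cB\cF(p)$ case, \eqref{BERFRAND2} shows that $R_{n+1}=2$ when $\theta_{n+1}=1$ and $R_{n+1}=1+1/R_n$ when $\theta_{n+1}=0$. Since $2=F_3/F_2$ and $1+F_{k+2}/F_{k+3}=F_{k+4}/F_{k+3}$, an induction gives
$$
R_n=\frac{F_{A_n+3}}{F_{A_n+2}},
$$
where $A_n$ is the number of steps since the last index $k\leq n$ with $\theta_k=1$. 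Up to the deterministic start, which contributes $O(1)$ to $\log X_n$ and is negligible after dividing by $\sqrt{n}$, $(A_n)$ is the age chain on $\{0,1,2,\ldots\}$. Its transitions are $k\to 0$ with probability $p$ and $k\to k+1$ with probability $1-p$, and its stationary law is exactly $\cG(p)$. Put $f(k)=\log(F_{k+3}/F_{k+2})$. Then $f$ is bounded, with values in $[\log(3/2),\log 2]$, and by \eqref{EQLOG}, $\log X_n=\sum_{k\le n} f(A_k)+O(1)$.

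The state $0$ is an atom that the chain hits with probability $p$ at every step. Hence the excursions between successive visits to $0$ are i.i.d. with geometric lengths, which have exponential moments. The chain is therefore uniformly (Doeblin) ergodic, and the regenerative CLT for bounded functionals applies. I will write $\sum_{k\le n}(f(A_k)-L(p))$ as a sum of i.i.d. centered excursion blocks plus two boundary terms. The boundary terms are $O_P(1)$, because the block length has a geometric tail and $f$ is bounded. Anscombe's theorem then gives \eqref{BERFAN} with
$$
\Gamma(p)=\mathrm{Var}_\pi(f(A_0))+2\sum_{\ell\ge1}\mathrm{Cov}_\pi\big(f(A_0),f(A_\ell)\big),
$$
and this series converges geometrically. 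The first term is $\sigma^2(p)$ in \eqref{AVAR1}. For the second term I will condition on whether a renewal occurs in $]0,\ell]$. If none occurs, which has probability $(1-p)^\ell$, then $A_\ell=A_0+\ell$. Otherwise $A_\ell$ is independent of $A_0$. Using $A_0\sim\cG(p)$, the sum of the covariances should reduce to the expression $\tau^2(p)$ in \eqref{AVAR2}.

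For general $\cB\cF(N,p)$, the state is the vector $V_n=(R_n,\ldots,R_{n-N+1})\in[1,2]^N$. By \eqref{BFRAND}, $V_{n+1}$ is a deterministic function of $V_n$ and $\theta_{n+1}$, and $\log R_{n+1}$ is a bounded function of $(V_n,\theta_{n+1})$. If $\theta=N$ occurs at $N$ consecutive steps, then $X_{k+1}=2X_k$ each time and $V$ equals $(2,\ldots,2)$ whatever it was before. So this point is an atom, reached from any state within $N$ steps with probability $p^{N^2}>0$. The Doeblin and regeneration argument above therefore carries over verbatim and yields \eqref{BERFAN} with $\Gamma(p)$ given by the regenerative variance formula.

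The main obstacle is the variance identification in the Bernoulli case. The stationary covariance $\mathrm{Cov}_\pi(f(A_0),f(A_\ell))$ also contains a contribution from the event that a renewal occurs before time $\ell$. I need to check carefully that, after summing over $\ell$ and using the memorylessness of the geometric law, this contribution recombines into the stated $\tau^2(p)$. I will attempt this by writing both sides as explicit double series in $p(1-p)^k$ and comparing them term by term. A secondary point is to verify that $\Gamma(p)>0$. This follows because $f$ is non-constant on the atom's excursions, so the block sums are not degenerate.
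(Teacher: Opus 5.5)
Your route to the CLT itself is the paper's: an accessible atom followed by the regenerative CLT. For general $N$, your point $(2,\ldots,2)$ in the coordinates $X_n/X_{n-1}$ is the paper's regeneration point $r_N=(2,4,\ldots,2^N)$ in the coordinates $X_n/X_{n-k}$. It is reached with the same probability $p^{N^2}$, and the paper then applies Latala's regenerative CLT exactly as you do. That part is fine.

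The gap is the Bernoulli variance step you flagged, and it cannot be closed, because the identity you are aiming for does not hold for \eqref{AVAR2} as printed. To see this, finish your own conditioning with $f(k)=\log(F_{k+3}/F_{k+2})$ and $A_0\sim\cG(p)$:
\begin{itemize}
\item On a renewal in $]0,\ell]$, $A_\ell=j$ with probability $p(1-p)^j$ for $0\le j\le \ell-1$, independently of $A_0$. That event therefore contributes $L(p)\sum_{j<\ell}p(1-p)^jf(j)$.
\item Stationarity of $A_\ell$ gives $\sum_{j<\ell}p(1-p)^jf(j)=L(p)-(1-p)^\ell\,\dE[f(G+\ell)]$.
\item This term cancels against $-L^2(p)$ and leaves exactly
\[
\mathrm{Cov}_\pi\big(f(A_0),f(A_\ell)\big)=(1-p)^\ell\,\mathrm{Cov}\big(f(G),f(G+\ell)\big).
\]
\end{itemize}
Nothing further recombines. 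Memorylessness has already been used, and the weight $(1-p)^\ell$ remains.

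The weighted and unweighted series really differ, because $\mathrm{Cov}(f(G),f(G+\ell))$ alternates in sign: $f(k)-\log\varphi$ behaves like $(1-r)r^{k+2}$ with $r=-\varphi^{-2}$. At $p=1/2$ one finds:
\begin{itemize}
\item $\sigma^2\approx 0.0159$;
\item $\mathrm{Cov}(f(G),f(G+1))\approx -0.0058$, $\mathrm{Cov}(f(G),f(G+2))\approx 0.0022$, $\mathrm{Cov}(f(G),f(G+3))\approx -0.00085$.
\end{itemize}
The weighted series gives $\Gamma(1/2)\approx 0.0110$. This matches the excursion formula \eqref{AVAR}, $p\,\dE[(\log F_{G+3}-(G+1)L(p))^2]$, which is exactly your regenerative variance: cycle sum $\log F_{G+3}-(G+1)L(p)$ and mean cycle length $1/p$. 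With the unweighted $\tau^2(p)$ of \eqref{AVAR2}, $\sigma^2(p)+2\tau^2(p)$ is only about $0.0076$. Your planned term-by-term comparison will therefore fail.

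What you can prove, and should state, is
\[
\Gamma(p)=\sigma^2(p)+2\sum_{\ell\ge 1}(1-p)^\ell\,\mathrm{Cov}\big(f(G),f(G+\ell)\big),
\]
or equivalently \eqref{AVAR}. The paper gives no derivation to reconcile with this; it only cites Benda and Wu for the Bernoulli variance. \eqref{AVAR2} should therefore be read as missing the factor $(1-p)^\ell$.
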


\noindent{\bf Proof.}
The proof is given in Appendix\,B. \demend

\begin{rem}
If $(X_n)$ is a Bernoulli-Fibonacci $\cB\cF(p)$ sequence, we can show after some tedious but straightforward calculations that $\Gamma(p)$ can be drastically reduced to
\begin{eqnarray}
\label{AVAR}
\Gamma(p) &=&p\dE\Big[\Big( \log(F_{G+3}) -(G+1)L(p) \Big)^2\Big] \notag \\
&=&\sum_{k=0}^\infty p^2 (1-p)^{k}  \left( \log(F_{k+3}) -(k+1)L(p) \right)^2.
\end{eqnarray}

\end{rem}

\begin{figure}[ht]
\vspace{-4.4cm}
\begin{center}
\includegraphics[scale=0.44]{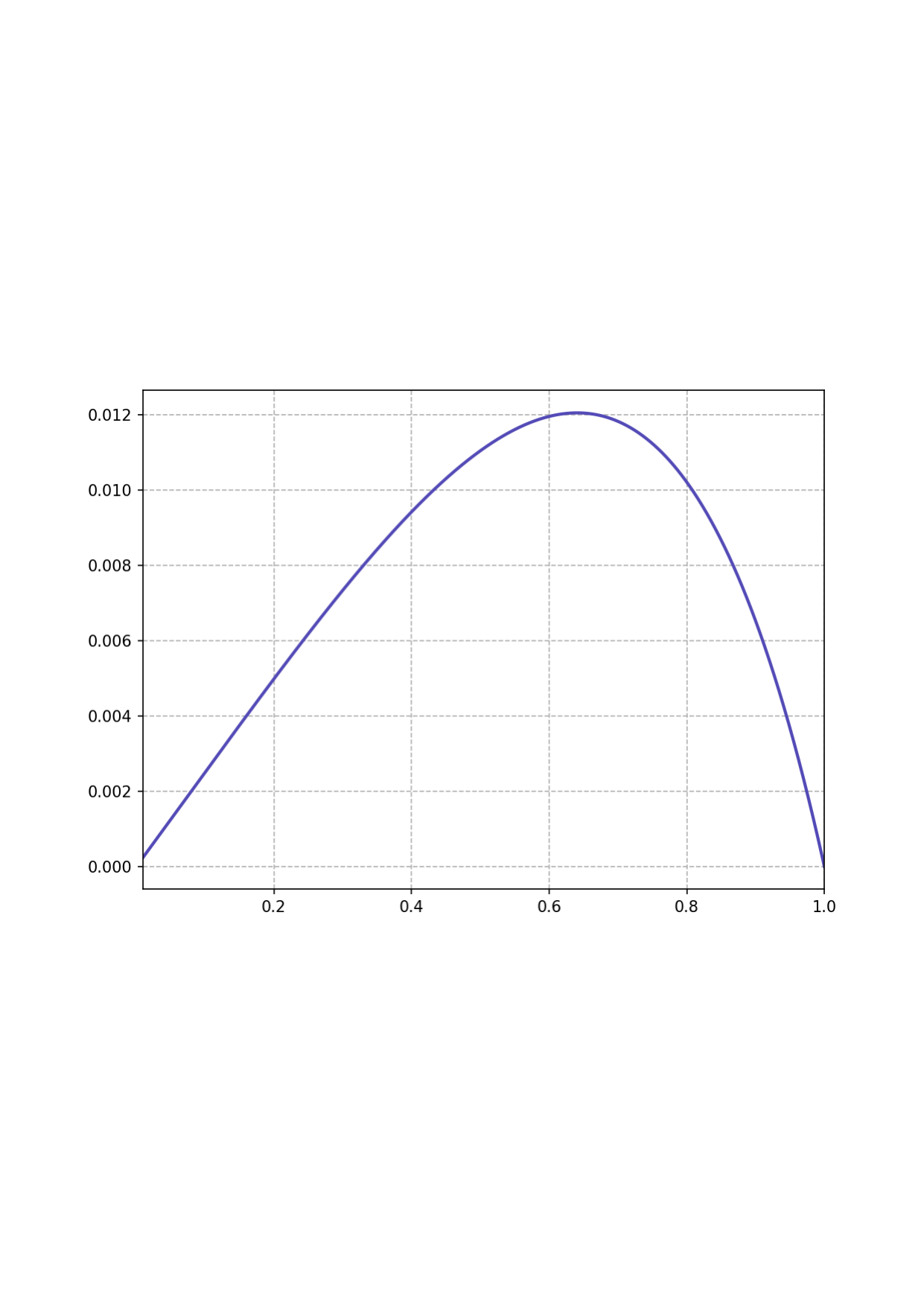}
\vspace{-4cm}
\caption{Variation of the asymptotic variance $\Gamma(p)$ given by \eqref{AVAR} with respect to $p$ with
$\Gamma(0)=\Gamma(1)=0$. t is worth noting that the asymptotic variance $\Gamma(p)$ is quite small.}
\label{Fig-BFCLT}
\end{center}
\end{figure}

Our last result deals with the large deviation principle for the finite-time Lyapunov exponent of random Bernoulli-Fibonacci $\cB\cF(p)$ sequences.

\begin{thm}
\label{T-LDPBERF}
Assume that $(X_n)$ is a random Bernoulli-Fibonacci $\cB\cF(p)$ sequence with $0< p < 1$. 
Then, the sequence $(\log X_n/n)$ satisfies a large deviation principle with speed $n$ and good rate function  
\begin{equation}
\label{RATEF}
I(p,x)=\sup_{t \in \dR} \Big\{ x t - \Lambda(p,t) \Big\}
\end{equation}
where, for any real number $t$, the limiting cumulant generating function $\Lambda(p,t)$ is the unique solution of the implicit equation
\begin{equation}
\label{DEFLAMBDA}
\dE\Big[ F^t_{G+3} \! \exp\big(\!-\!(G+1)\Lambda(p,t)\big) \Big] 
=1.
\end{equation}
\end{thm}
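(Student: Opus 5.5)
The plan is to exploit a renewal structure hidden in \eqref{BERFRAND2} and then apply the G\"artner--Ellis theorem.

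\textbf{Renewal structure.} Call $n$ a doubling time if $\theta_{n+1}=1$, so that $X_{n+1}=2X_n$. Suppose $n$ is a doubling time with base $Y=X_n$, and the next $k$ steps are Fibonacci steps ($\theta=0$). Then, by induction,
\[
X_{n+1+j}=F_{j+3}\,Y \qquad \text{for } 0\le j\le k,
\]
using $X_{n+1}=2Y=F_3Y$ and $X_n=Y=F_2Y$. If the following step is again a doubling, the new base is $F_{k+3}Y$. Let $T_1<T_2<\cdots$ be the successive doubling times. The cycle lengths $T_{i+1}-T_i$ are i.i.d. with the law of $G+1$, where $G\sim\cG(p)$. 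Over each cycle the logarithm increases by $\log F_{G+3}$, so
\[
\log X_{T_m}=\log X_{T_1}+\sum_{i=1}^{m-1}\log F_{G_i+3},
\]
with $(G_i)$ i.i.d. Hence $\log X_n$ is a renewal--reward process. The initial segment before $T_1$ is a pure Fibonacci stretch, and the incomplete last cycle contributes a factor $F_{j+3}/F_2\in[1,F_{j+3}]$, where $j$ is the age at time $n$. The same structure explains \eqref{LYAP}.

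\textbf{Moment generating function and $\Lambda$.} Fix $t\in\dR$ and set $u_n(t)=\dE[X_n^t]$. Conditioning on the first cycle gives a renewal equation
\[
u_n(t)=\sum_{k\ge0}p(1-p)^kF_{k+3}^t\,u_{n-k-1}(t)+b_n(t),
\]
where $b_n(t)$ collects the explicit contributions of the first stretch and the incomplete cycle. Both are bounded by $C(1-p)^{n}\varphi^{|t|n}$-type terms. Define $H(t,\lambda)=\dE[F^t_{G+3}e^{-(G+1)\lambda}]$. Since $F_{k+3}\asymp\varphi^{k}$, the series converges exactly when $\lambda>\lambda_0(t)=\log((1-p)\varphi^t)$. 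On $]\lambda_0(t),\infty[$ the map $\lambda\mapsto H(t,\lambda)$ is continuous and strictly decreasing, tends to $0$ as $\lambda\to\infty$, and tends to $+\infty$ as $\lambda\downarrow\lambda_0(t)$, because the terms become of constant order and the sum diverges. This gives existence and uniqueness of $\Lambda(p,t)$ in \eqref{DEFLAMBDA}, with $\Lambda(p,t)>\lambda_0(t)$ strictly. Moreover $\partial_\lambda H<0$, so the implicit function theorem shows that $t\mapsto\Lambda(p,t)$ is real-analytic on all of $\dR$. Convexity follows from H\"older applied to $H$, or alternatively as a limit of convex log-moment functions.

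\textbf{Main step: the limit $\frac1n\log u_n(t)\to\Lambda(p,t)$.} I expect this to be the main obstacle. I would take generating functions: $\sum_n u_n z^n=B(z)/(1-H(t,-\log z))$. The radius of convergence is $z^*=e^{-\Lambda}$, which is a simple zero of the denominator. The numerator $B$ has a strictly larger radius, namely $e^{-\lambda_0(t)}$ up to the initial terms, precisely because $\Lambda>\lambda_0$. As an alternative avoiding complex analysis, I would sandwich directly. Define the tilted measure $\tilde\dP(G=k)=p(1-p)^kF^t_{k+3}e^{-(k+1)\Lambda}$, which is a probability by \eqref{DEFLAMBDA}. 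Then $e^{-n\Lambda}u_n$ becomes a renewal probability under a tilted renewal process with exponential tails, times bounded correction factors. The key renewal theorem then gives $e^{-n\Lambda}u_n\to c(t)\in]0,\infty[$.

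\textbf{Conclusion.} Since $\Lambda(p,\cdot)$ is finite and differentiable on $\dR$, the G\"artner--Ellis theorem yields the large deviation principle with speed $n$ and good rate function $I(p,x)$ given by \eqref{RATEF}.
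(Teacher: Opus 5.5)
Your proposal is correct and follows essentially the same route as the paper. Conditioning on the first cycle gives exactly the recursion of Lemma~\ref{L-LAPLACE}, where the paper conditions on the last doubling instead; your generating function $B(z)/(1-H(t,-\log z))$ is the paper's $\Phi(t,z)/(1-pz\Phi(t,z))$ from Lemma~\ref{L-GEN}, with a simple dominant pole at $z^*=e^{-\Lambda(p,t)}$; and you conclude with G\"artner--Ellis as the paper does. You also fill in steps the paper leaves implicit: the monotonicity argument for existence and uniqueness of $\Lambda(p,t)$, and the strict inequality $\Lambda(p,t)>\lambda_0(t)$, which makes the numerator analytic beyond $z^*$. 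The one point both you and the paper skip is aperiodicity: cycle lengths $1$ and $2$ both have positive probability, and this is what rules out other poles on $|z|=z^*$, or lets you invoke the key renewal theorem in your tilted variant.
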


\noindent{\bf Proof.}
The proof is given in Appendix\,C. \demend

\begin{rem}
\label{R-LDP}
The effective domain of the rate function is $[\log(\varphi),\log(2)]$ with $I(p, \log(\varphi))\!=\!-\log(1-p)$, $I(p, \log(2))\!=\!-\log(p)$. In addition,
$\Lambda(p,t)$ is finite for all $t \in \dR$ and is infinitely differentiable. Let $H_G(p,t)=F^t_{G+3}\! \exp\big(\!-\!(G+1)\Lambda(p,t)\big)$. By deriving the implicit equation \eqref{DEFLAMBDA}, we obtain that for all $t \in \dR$,
\begin{equation}
\label{DERLAMBDA}
 \Lambda^\prime(p,t)=\frac{\dE\Big[ \log(F_{G+3}) H_G(p,t) \Big]}{\dE\Big[ (G+1) H_G(p,t) \Big]}.
\end{equation}
We find again as expected that $ \Lambda^\prime(p,0)=L(p)$. As a matter of fact, $\Lambda(p,0)=0$, 
$H_G(p,0)=1$, $\dE[G+1]=1/p$ and
\begin{align*}
 \dE[ \log(F_{G+3})] &=  \sum_{k=0}^\infty p(1-p)^k \log F_{k+3}=
 \sum_{k=0}^\infty p(1-p)^k \sum_{\ell=0}^k \log\left(\frac{F_{\ell+3}}{F_{\ell+2}}\right), \\
 &= \sum_{\ell=0}^\infty p \log\left(\frac{F_{\ell+3}}{F_{\ell+2}}\right) \sum_{k=\ell}^\infty (1-p)^k  
 = \frac{1}{p}\sum_{\ell=0}^\infty p(1-p)^\ell \log\left(\frac{F_{\ell+3}}{F_{\ell+2}}\right), \\
 &=\frac{1}{p}\dE\left[\log \left( \frac{F_{G+3}}{F_{G+2}} \right)\right]=\frac{1}{p}L(p).
\end{align*}
By the same token,  we also obtain that for all $t \in \dR$,
\begin{equation}
\label{DER2LAMBDA}
 \Lambda^{\prime\prime} (p,t)=\frac{\dE\Big[ \Big(\!\log(F_{G+3}) -(G+1)L^\prime(t) \Big)^2 H_G(p,t) \Big]}{\dE\Big[ (G+1) H_G(p,t) \Big]},
\end{equation}
which immediately leads via \eqref{AVAR} to $ \Lambda^{\prime \prime}(p,0)=\Gamma(p)$. Equation \eqref{DER2LAMBDA} also implies that 
$\Lambda(p,t)$ is strictly convex.
\end{rem}

\begin{figure}[ht]
\vspace{-4cm}
\begin{center}
\includegraphics[scale=0.45]{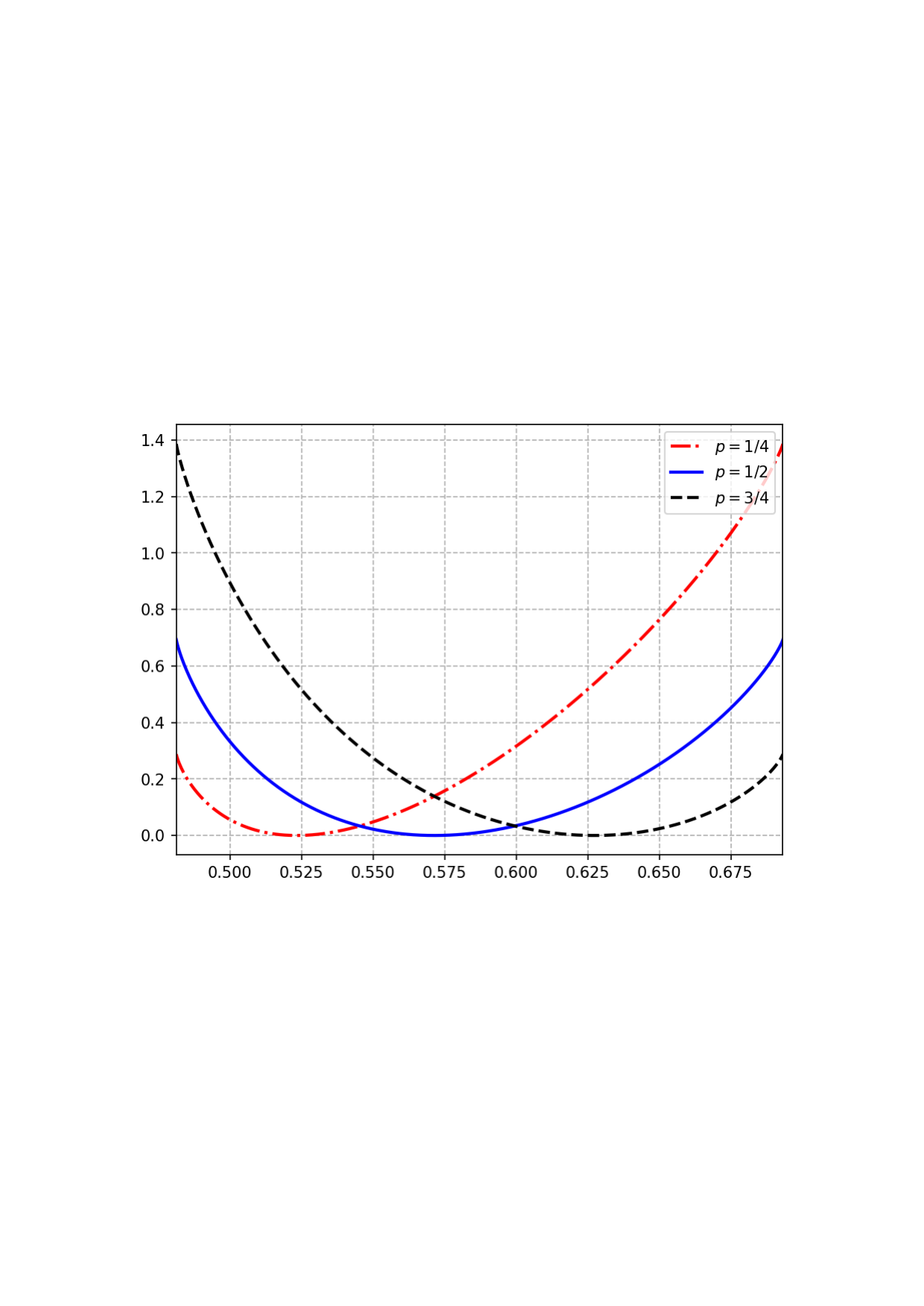}
\vspace{-4cm}
\caption{The rate function $I(p,x)$ for three different values of $p$.}
\label{Fig-BFLDP}
\end{center}
\end{figure}


\section{To go a little further}

We investigated the asymptotic behavior of random Binomial-Fibonacci sequences. An interesting challenge 
would be to identify the unique stationary distribution $\pi$ associated with the ratios of random Binomial-Fibonacci $\cB\cF(N,p)$ sequences with $N \geq 2$ and to provide an exact expression for the Lyapunov exponent 
$L(p)$ as in \eqref{LYAP}. It would also be interesting to study random Fibonacci sequences associated with 
discrete distribution $(\theta_n)$ with countably infinite support such as random Geometric-Fibonacci sequences
\begin{equation}
\label{INFFRAND}
X_{n+1}=  X_{n}+ X_{n+\theta_{n+1}-1\wedge 1}
\end{equation}
where $\theta_{n+1}$ stands for a Geometric $\cG(p)$ random variable, independent of $X_{1},\ldots, X_{n}$. In this context, it will not be possible to use directly the Furstenberg-Kesten theorem \cite{FK1960} on the norm of the product of random matrices.


\section*{Appendix A}

\begin{center}
{\small PROOF OF THE ALMOST SURE CONVERGENCE}
\end{center}

\renewcommand{\thesection}{\Alph{section}} 
\renewcommand{\theequation}
{\thesection.\arabic{equation}} \setcounter{section}{1}  
\setcounter{equation}{0}
Let $(\theta_{n})$ be a sequence of independent random variables sharing the same Binomial $\cB(N,p)$ distribution. Denote by $A_{\theta_n}$ the random square matrix of order $N+1$, 
\begin{equation*}
A_{\theta_n} = 
\begin{pmatrix}
   1 & 0 & \cdots & 1 & \cdots & 0 \\
   1 & 0 & \cdots  & \cdots & \cdots & 0 \\
   0 & 1 & 0  & \cdots & \cdots & 0 \\
   \vdots & \ddots &  \ddots & \ddots &  & \vdots \\
   \vdots &  & \ddots  & \ddots & \ddots & \vdots \\
   0 & \cdots & \cdots & 0 & 1 & 0 \\
\end{pmatrix}
\end{equation*}
where the second $1$ in the first row of $A_{\theta_n}$ is located at position $N - \theta_n +1$. One can observe that $\parallel \! A_{\theta_n} \!\parallel_\infty=2$, which obviously implies that 
$\dE\big[\log\big(\!\parallel \! A_{\theta_n} \! \parallel_\infty\! \big) \big]=\log(2)$. Consequently, we can deduce from  \cite[Theorem 2]{FK1960} that
\begin{equation}
\label{FKPROD}
\lim_{n \rightarrow \infty} \frac{1}{n} \log\big(\!\parallel A_{\theta_n} \cdots A_{1} \parallel_{\infty} \!\!\big)=L(p) 
\hspace{1cm} \text{a.s.}
\end{equation}
where the Lyapunov exponent $L(p)$ is always smaller than $\log(2)$.
We also obtain from \cite[Theorem 1]{FK1960} that convergence \eqref{FKPROD} also holds in $\dL^1$.
Hereafter, let $P_n$ be the product of random matrices defined by $P_{N+1}=I_{N+1}$ and, for all $n \geq N+2$, by $P_n=A_{\theta_n} \cdots  A_{\theta_{N+2}}$. It follows from \eqref{BFRAND} that for all $n \geq N+1$,
\begin{equation}
\label{BFRANDMATRIX}
Y_{n}=P_{n} Y_{N+1}
\end{equation}
where $Y_n$ stands for the random vector of $\dR^{N+1}$, $Y_n=(X_{n}, \ldots,X_{n-N})^T$. Since $(X_n)$ is an increasing sequence, $X_n < \parallel Y_n \parallel_{1} < (N+1)X_n$. In addition, we also have from \eqref{BFRANDMATRIX} that $\parallel P_n \parallel_\infty \leq \parallel Y_n \parallel_{1} \leq (N+1)F_{N+1}\parallel P_n \parallel_\infty$.  Then, \eqref{FKPROD} and \eqref{BFRANDMATRIX} lead to
\begin{equation*}
\lim_{n \rightarrow \infty} \frac{1}{n} \log(X_n)=\lim_{n \rightarrow \infty} \frac{1}{n} \log\big(\! \parallel Y_n \parallel_{1}\!\!\big)=\lim_{n \rightarrow \infty} \frac{1}{n} \log\big(\! \parallel P_n \parallel_{\infty}\!\!\big)=L(p) 
\hspace{1cm} \text{a.s.}
\end{equation*}
which is precisely \eqref{BERFASCVG}.
From now on, we focus our attention on the proof of \eqref{LYAP} for random Bernoulli-Fibonacci $\cB\cF(p)$ sequences. It follows from \eqref{BERFRAND2} and \eqref{RICCATI} 
that the sequence $(R_n)$ is given by the iterations of random Lipschitz functions,
\begin{equation}
\label{RANDITONE}
R_{n+1}=f_{\theta_{n+1}}(R_n)
\end{equation}
where $R_n \in [3/2,2]$, $\theta_{n+1}$ is a Bernoulli random variable with $\cB(p)$ distribution, and
$f_{\theta_{n+1}}$ is the Lipschitz 
function defined, for all $x \!\in \![3/2,2]$, by
$$f_{\theta_{n+1}}(x)=(1+\theta_{n+1}) + \frac{(1-\theta_{n+1})}{x}. $$
It is easy to see that for all $x,y\in [3/2,2]$,
$$ |f_{\theta_{n+1}}(x)-f_{\theta_{n+1}}(y)| \leq \frac{4}{9}K_{\theta_{n+1}} |x-y| $$
where $K_{\theta_{n+1}}=1-\theta_{n+1}$. We have $\dE[K_{\theta_{n+1}}]=1-p$, $\dE[\log K_{\theta_{n+1}}]=-\infty$, as well as
$$\dE[|f_{\theta_{n+1}}(2)-2|]=\frac{1}{2}\dE[1-\theta_{n+1}]=\frac{1}{2}(1-p)<1.$$ 
Therefore, we can deduce from \cite[Theorem 1.1]{DIA1999}
that $(R_n)$ is a geometrically ergodic Markov chain.
Denote by $\pi$ its unique stationary distribution. We also obtain from relation \eqref{RANDITONE} that $\pi$ is a discrete
measure whose support is given by all the finite continued fractions of $2$, that is
$f^{(0)}(2)=2$ and for all $k\geq 1$, $f^{(k)}(2)=[1;1,\ldots,1,2]$.
In addition, we have for all $k\geq 0$, 
$
\pi \{f^{(k)}(2)\}=p (1-p)^k.
$
Furthermore, it is well-known that for all $k \geq 0$, $f^{(k)}(2)=F_{k+3}/F_{k+2}$.
Finally, \eqref{EQLOG} implies that 
$$
\lim_{n\rightarrow \infty} \frac{1}{n}  \log (X_{n}) = \lim_{n\rightarrow \infty} \frac{1}{n} \sum_{k=1}^n \log (R_{k}) = \sum_{k=0}^\infty p (1-p)^{k} \log \!\left( \frac{F_{k+3}}{F_{k+2}} \right)=L(p) 
\qquad \text{a.s.}
$$
which completes the proof of Theorem \ref{T-ASCVGBERF}.
\demend


\section*{Appendix B}

\begin{center}
{\small PROOF OF THE CENTRAL LIMIT THEOREM}
\end{center}

\renewcommand{\thesection}{\Alph{section}} 
\renewcommand{\theequation}
{\thesection.\arabic{equation}} \setcounter{section}{2}  
\setcounter{equation}{0}

The proof of Theorem \ref{T-ANBERF} is a direct application of the central limit theorem for regenerative ergodic Markov chains \cite[Theorem 4.1]{LATALA2008}. For all $n \geq N+1$, denote by $\cR_n$ the random vector of $\dR^{N}$ associated with the ratios, $\cR_n=(\cR_{1,n}, \ldots,\cR_{N,n})^T$ where, for all $1\leq k \leq N$, $\cR_{k,n}=X_n/X_{n-k}$. 
Let $E_N$ be the compact set of $\dR^N$, $E_N\!=\!\prod_{k=1}^N \big[(3/2)^k,2^k\big]$. It follows from \eqref{BFRAND} that
$\cR_n \in E_N$ and, for all $n \geq N+1$,
\begin{equation}
\label{RANDIT}
\cR_{n+1}=f_{\theta_{n+1}}(\cR_n)
\end{equation}
where $\theta_{n+1}$ is a Binomial random variable with $\cB(N,p)$ distribution and
$$
f_{\theta_{n+1}}(x)=g_{\theta_{n+1}}(x)
\begin{pmatrix}
1 \\
x_1 \\
\vdots \\
x_{N-1}
\end{pmatrix} 
$$
where $g_{\theta_{n+1}}(x)=1+x_{N-\theta_{n+1}}^{-1}$ if $\theta_{n+1} \neq N$, while
$g_{\theta_{n+1}}(x)=2$ if $\theta_{n+1} = N$. Hence, we deduce from \eqref{RANDIT} that for every $x \in E_N$,
\begin{equation}
\label{REGEN}
f^N_N(x)=f_n(f_N(\cdots f_N(x))\cdots)=\begin{pmatrix}
2 \\
4 \\
\vdots \\
2^{N}
\end{pmatrix} 
.
\end{equation}
Equation \eqref{REGEN} simply means that the boundary point $r_N=(2,4,\ldots,2^N)^T$ is a regeneration state for the Markov chain $(R_n)$.
Since $\dP(\theta_{n+1}=N)=p^N$, the probability to reach $r_N$ is $\dP(\theta_{n+1}=N, \ldots,\theta_{n+N}=N)=p^{N^2}$.
Consequently, the Markov chain $(R_n)$ satisfies the $N$-step minorization condition \cite[Equation (4)]{LATALA2008} with $m=N$, $\varepsilon=p^{N^2}$ and 
$\nu_N=\delta_{r_N}$.
Furthermore, we have from \eqref{EQLOG} that
\begin{equation}
\label{REGENLOG}
\log(X_n)=\sum_{k=1}^n \log(R_{k})= \sum_{k=1}^N \log(R_{k}) + \sum_{k=N+1}^n h(\cR_k)
\end{equation}
where, for all $x \in E_N$, $h(x)=\log(x_1)$. Since $3/2 \leq x_1 \leq 2$ on $E_N$, the function $h$ is obviously bounded. Finally, we can apply 
\cite[Theorem 4.1]{LATALA2008} to conclude that
\begin{equation*}
\frac{\log (X_{n}) - n L(p)}{\sqrt{n}}   \underset{n\rightarrow+\infty}{\overset{\cL}{\rightarrow}} \cN\big(0, \Gamma(p)\big).
\end{equation*}
In the special case where $(X_n)$ is a random Bernoulli-Fibonacci $\cB\cF(p)$ sequence, the asymptotic variance $\Gamma(p)$ can be found in
\cite[Theorem 2.4]{BENDA1998} or \cite[Corollary 1]{WU2000}, which completes the proof of Theorem \ref{T-ANBERF}. \demend


\section*{Appendix C}

\begin{center}
{\small PROOF OF THE LARGE DEVIATION PRINCIPLE}
\end{center}

\renewcommand{\thesection}{\Alph{section}} 
\renewcommand{\theequation}
{\thesection.\arabic{equation}} \setcounter{section}{3}  
\setcounter{equation}{0}

We carry on with the proof of Theorem \ref{T-LDPBERF} where $(X_n)$ is a random Bernoulli-Fibonacci $\cB\cF(p)$ sequence.
Denote by $m_n$ the Laplace transform of $\log(X_n)$. We have from \eqref{EQLOG} that for all $t \in \dR$ and for all $n \geq 1$,
\begin{equation}
\label{DEFMNT}
m_n(t)= \dE\Big[\exp(t \log(X_n))\Big]= \dE\Big[\prod_{k=1}^n R_k^t\Big].
\vspace{-1ex}
\end{equation}
One can observe that $m_n(t)$ is finite for all $n \geq 1$ and for all $t\in \dR$ since $X_n \leq 2^n$.
We shall make use of the generating function of the sequence $(m_n(t))$ defined, for all $t \in \dR$ and for all $z \in \dC$, by
\vspace{-1ex}
\begin{equation}
\label{DEFGENF}
G(t,z)=\sum_{n=2}^\infty m_n(t)z^{n-2}.
\vspace{-1ex}
\end{equation}
The proof of Theorem \ref{T-LDPBERF} relies on the two following lemmas.

\begin{lem}
\label{L-LAPLACE}
We have for all $t \in \dR$ and for all $n \geq 3$,
\begin{equation}
\label{RECMNT}
m_n(t)= (1-p)^{n-2}F_{n+1}^t + \sum_{k=0}^{n-3} p(1-p)^k F_{k+3}^t m_{n-k-1}(t).
\end{equation}
\end{lem}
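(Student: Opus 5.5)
The plan is to condition on the first time the Bernoulli variables produce a ``super big step''. For the $\cB\cF(p)$ sequence we have $X_0=X_1=1$, $X_2=2$, and by \eqref{BERFRAND2}, for $n\ge 2$, $X_{n+1}=2X_n$ if $\theta_{n+1}=1$ and $X_{n+1}=X_n+X_{n-1}$ if $\theta_{n+1}=0$. Since $m_n(t)=\dE[X_n^t]$ by \eqref{DEFMNT}, we define the stopping index $T=\inf\{j\ge 3: \theta_j=1\}$. We then split $m_n(t)=\dE[X_n^t\ind_{\{T>n\}}]+\sum_{j=3}^{n}\dE[X_n^t\ind_{\{T=j\}}]$ and reindex the sum by $j=k+3$ with $0\le k\le n-3$.

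First, on $\{T>n\}$, every step from $X_2$ to $X_n$ is a Fibonacci step. Since $X_1=F_2$ and $X_2=F_3$, induction gives $X_n=F_{n+1}$, which is deterministic. This event has probability $(1-p)^{n-2}$, because it requires $\theta_3=\cdots=\theta_n=0$. This produces the first term $(1-p)^{n-2}F_{n+1}^t$.

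Next, on $\{T=k+3\}$, the same argument gives $X_{k+2}=F_{k+3}$ and $X_{k+1}=F_{k+2}$, and the doubling step gives $X_{k+3}=2F_{k+3}$. The key observation is that the pair $(X_{k+3},X_{k+2})=F_{k+3}\,(2,1)$ is proportional to the initial pair $(X_2,X_1)=(2,1)$. The recursion \eqref{BERFRAND2} is linear in the state and its subsequent coefficients depend only on $\theta_{k+4},\theta_{k+5},\ldots$. So we define a fresh sequence $(X'_m)$ with $X'_0=X'_1=1$ and $X'_{m+1}=(1+\theta'_{m+1})X'_m+(1-\theta'_{m+1})X'_{m-1}$, where $\theta'_{m}=\theta_{m+k+1}$ for $m\ge 3$. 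By induction on $m$ we get $X_{k+1+m}=F_{k+3}X'_m$ for all $m\ge 1$, and in particular $X_n=F_{k+3}X'_{n-k-1}$.

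The variables $\theta'$ are i.i.d.\ $\cB(p)$ and independent of $\{T=k+3\}$, which depends only on $\theta_3,\ldots,\theta_{k+3}$. Therefore $X'_{n-k-1}$ has the law of $X_{n-k-1}$ and is independent of that event. Hence $\dE[X_n^t\ind_{\{T=k+3\}}]=\dP(T=k+3)\,F_{k+3}^t\,m_{n-k-1}(t)=p(1-p)^kF_{k+3}^t m_{n-k-1}(t)$. Summing over $0\le k\le n-3$ gives \eqref{RECMNT}. Note that $n-k-1\ge 2$, so only $m_j$ with $j\ge2$ appear, which is consistent with \eqref{DEFGENF}.

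The only delicate point is the index bookkeeping in the regeneration identity $X_{k+1+m}=F_{k+3}X'_m$. One must check that the base cases $m=1,2$ match, which they do because $X'_1=1$ and $X'_2=2$, and that the shift of the $\theta$'s aligns correctly. Once this is verified, the rest is a direct application of independence. Finiteness of all terms is guaranteed by $X_n\le 2^n$.
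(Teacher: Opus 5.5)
Your proof is correct, but it runs the renewal decomposition in the opposite direction from the paper. You condition on the \emph{first} index $T\ge 3$ with $\theta_T=1$, whereas the paper conditions on the \emph{last} index $n-k$ with $\theta_{n-k}=1$ among $\theta_3,\ldots,\theta_n$.

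In the paper's version, a step with $\theta=1$ resets the Riccati ratio to $R_{n-k}=2$ whatever the past. So the trailing ratios are deterministic, $R_{n-k}\cdots R_n=F_{k+3}$. The prefix product is literally $X_{n-k-1}$, a function of $\theta_3,\ldots,\theta_{n-k-1}$ alone. Hence $X_n=X_{n-k-1}F_{k+3}$ pathwise on that event, and independence gives the factor $m_{n-k-1}(t)$ with no auxiliary process.

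In your version the roles are mirrored. The prefix is the deterministic Fibonacci path, and the unconstrained part is the suffix. You identify the suffix in law with a fresh copy $X'$, using the linearity of \eqref{BERFRAND2} and the proportionality $(X_{k+3},X_{k+2})=F_{k+3}(X_2,X_1)$. The weights agree because $\dP(T=k+3)$ and the probability that the last $1$ sits at position $n-k$ both equal $p(1-p)^k$.

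The paper's route is slightly more economical and stays within its Riccati-ratio framework. Yours is the standard first-regeneration argument. It costs you the shifted-copy construction and the index check $X_{k+1+m}=F_{k+3}X'_m$, which you carried out correctly, including the base cases $m=1,2$. In exchange, it is written out more explicitly than the paper's brief ``conditioning on the $n-k-1$ terms preceding the last $1$''.
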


\begin{proof}
We recall from \eqref{RANDITONE} that for all 
$n \geq 3$,
\begin{equation}
\label{PRODR}
\prod_{k=1}^n R_k= 2\prod_{k=3}^nf_{\theta_{k}}(R_{k-1}).
\end{equation}
On the event $\{\theta_3=0, \ldots, \theta_n=0\}$, which occurs with probability $(1-p)^{n-2}$, the product \eqref{PRODR} reduces to
\begin{equation}
\label{PRODRZ}
\prod_{k=1}^n R_k= F_3 \times\frac{F_4}{F_3} \times \cdots \times \frac{F_{n+1}}{F_n}=F_{n+1}.
\end{equation}
On the complementary of this event, there is at least one value of $1$ in $\theta_3, \ldots, \theta_n$. Let $k \in \{0,\ldots,n-3 \}$ be the number of zeros after the last $1$. This means that
$\theta_{n-k}=1$, $\theta_{n-k+1}=0$, \ldots, $\theta_n=0$, which occurs with probability $p(1-p)^{k}$. As before, the contribution of this last block reduces to
\begin{equation}
\label{PRODRZZ}
\prod_{\ell=n-k}^n R_k= F_3 \times\frac{F_4}{F_3} \times \cdots \times \frac{F_{k+3}}{F_{k+2}}=F_{k+3}.
\end{equation}
By conditioning on the $n-k-1$ terms preceding the last $1$, we deduce from \eqref{DEFMNT} and \eqref{PRODR} together with \eqref{PRODRZ} and \eqref{PRODRZZ} that 
\begin{equation*}
m_n(t)=(1-p)^{n-2}F_{n+1}^t + \sum_{k=0}^{n-3} p(1-p)^k F_{k+3}^t m_{n-k-1}(t), 
\end{equation*}
since the average contribution of these remaining terms
is precisely $m_{n-k-1}(t)$.
\end{proof}

\begin{lem}
\label{L-GEN}
We have for all $t \in \dR$ and for all $z \in \dC$ such that $|z| < R_G(t)$,
\begin{equation}
\label{CALCULATIONGEN}
G(t,z)= \frac{\Phi(t,z)}{1-pz \Phi(t,z)}
\end{equation}
where $R_G(t)$ is the unique positive solution of the equation $p z \Phi(t,z)=1$ with
\begin{equation}
\label{DEFPHI}
\Phi(t,z)=\sum_{k=0}^\infty \big( (1-p)z \big)^k F_{k+3}^t.
\end{equation}
\end{lem}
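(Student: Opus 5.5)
The plan is to turn the renewal-type recursion of Lemma \ref{L-LAPLACE} into an algebraic identity for generating functions. Multiply \eqref{RECMNT} by $z^{n-2}$ and sum over $n\geq 3$, then add the term $n=2$. Here $m_2(t)=X_2^t=2^t=F_3^t$, which is exactly the $n=2$ value of $(1-p)^{n-2}F_{n+1}^t$. So the first piece becomes
$$\sum_{n=2}^\infty (1-p)^{n-2}F_{n+1}^t z^{n-2}=\sum_{k=0}^\infty \big((1-p)z\big)^k F_{k+3}^t=\Phi(t,z).$$

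For the convolution piece, set $j=n-k-1$, so that $j\geq 2$ and $n-2=k+j-1$. Then
$$\sum_{n=3}^\infty\sum_{k=0}^{n-3} p(1-p)^kF_{k+3}^t m_{n-k-1}(t) z^{n-2}
= pz\sum_{k=0}^\infty \big((1-p)z\big)^kF_{k+3}^t \sum_{j=2}^\infty m_j(t)z^{j-2}= pz\,\Phi(t,z)G(t,z).$$
This gives $G=\Phi+pz\Phi G$, that is, $G(t,z)\big(1-pz\Phi(t,z)\big)=\Phi(t,z)$, and \eqref{CALCULATIONGEN} follows wherever $pz\Phi(t,z)\neq 1$.

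The rearrangement has to be justified for $|z|<R_G(t)$, and this is where the real work lies. First, $\Phi(t,\cdot)$ is a power series with nonnegative coefficients and radius of convergence $1/((1-p)\varphi^t)$ for $t\geq0$, and $1/(1-p)$ scaled by $\varphi^{-t}$ in general, by Binet's formula $F_{k}\sim\varphi^k/\sqrt5$. On the positive axis, $s(z)=pz\Phi(t,z)$ is continuous and strictly increasing, with $s(0)=0$. As $z$ tends to the radius $\rho$ of $\Phi$, the term $\big((1-p)\varphi^t z\big)^k$ tends to $1$ while the coefficient ratio $F_{k+3}^t/\varphi^{tk}$ tends to a positive constant. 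Hence $\Phi(t,z)\to\infty$ and $s$ crosses $1$, which gives a unique positive root $R_G(t)<\rho$. I expect this step to be the main obstacle: one needs the divergence of $\Phi$ at its radius, or more simply that $s$ exceeds $1$ somewhere before $\rho$.

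Next, I would prove convergence of $G$ for $0\le z<R_G(t)$ by working with nonnegative terms, since $m_n(t)>0$. Define the partial sums $G_N(z)=\sum_{n=2}^N m_n z^{n-2}$. The recursion gives $G_N\le \Phi+s(z)G_N$, so $G_N\le \Phi/(1-s)$. The partial sums are therefore bounded, and $G$ converges at $z$. This bounds the radius of $G$ from below by $R_G(t)$. Tonelli then legitimizes the rearrangement for these $z$, and absolute convergence extends the identity to complex $|z|<R_G(t)$. Finally, for such complex $z$ we have $|pz\Phi(t,z)|\le s(|z|)<1$, so the denominator does not vanish.
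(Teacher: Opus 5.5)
Your proof is correct and follows the paper's route. You multiply \eqref{RECMNT} by $z^{n-2}$, absorb $m_2(t)=F_3^t$ into $\Phi$, and reindex the convolution with $j=n-k-1$ to reach $G=\Phi+pz\Phi G$. The paper only asserts this identity and the simple pole at $z^*(t)$, so your extra steps fill gaps rather than change the method: existence and uniqueness of $R_G(t)<\rho$ from the divergence of $\Phi$ at its radius, the partial-sum bound $G_N\le \Phi/(1-s)$, Tonelli/Fubini, and the non-vanishing of the denominator via $|pz\Phi(t,z)|\le s(|z|)<1$.
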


\begin{proof}
We have from \eqref{DEFGENF} and \eqref{RECMNT} that for all $t \in \dR$ and for all $z \in \dC$,
\begin{equation}
\label{DECGENF1}
G(t,z) = m_{2}(t)+ \sum_{n=3}^\infty \!\big((1-p)z\big)^{n-2}F_{n+1}^t + \sum_{n=3}^\infty \sum_{k=0}^{n-3} p(1-p)^k F_{k+3}^t m_{n-k-1}(t) z^{n-2}.
\end{equation}
On the one hand,
\begin{equation}
\label{DECGENF2}
\sum_{n=3}^\infty \big((1-p)z\big)^{n-2}F_{n+1}^t = \sum_{n=1}^\infty \big((1-p)z\big)^{n}F_{n+3}^t= \Phi(t,z) - m_2(t).
\end{equation}
On the other hand,
\begin{align*}
\sum_{n=3}^\infty \sum_{k=0}^{n-3} p(1-p)^k F_{k+3}^t m_{n-k-1}(t) z^{n-2} & = \sum_{k=0}^\infty \sum_{n=k+3}^\infty p(1-p)^k F_{k+3}^t m_{n-k-1}(t) z^{n-2}, \\
&= \sum_{k=0}^\infty p(1-p)^k F_{k+3}^t \sum_{n=k+3}^\infty  m_{n-k-1}(t) z^{n-2}, \\
&= \sum_{k=0}^\infty p(1-p)^k F_{k+3}^t \sum_{\ell=2}^\infty  m_{\ell}(t) z^{k+\ell-1}, 
\end{align*}
which leads to 
\begin{equation}
\label{DECGENF3}
\sum_{n=3}^\infty \sum_{k=0}^{n-3} p(1-p)^k F_{k+3}^t m_{n-k-1}(t) z^{n-2}= p z \Phi(t,z) G(t,z).
\end{equation}
Therefore, we deduce from the conjunction of \eqref{DECGENF1}, \eqref{DECGENF2} and \eqref{DECGENF3} that for all $t \in \dR$ and for all $z \in \dC$,
$G(t,z)= \Phi(t,z) +pz \Phi(t,z) G(t,z)$,
that is
\begin{equation}
\label{DECGENFIN}
G(t,z)(1-pz \Phi(t,z))= \Phi(t,z).
\end{equation}
Denote by $z^*(t)$ the unique positive solution of the equation $pz \phi(t,z)=1$. Then, we have from \eqref{DECGENFIN} that $z^*(t)$ is a simple pole for $G$. Moreover,
for all $t \in \dR$ and for all $z \in \dC$ such that $|z| < z^*(t)$,
\begin{equation*}
G(t,z)= \frac{\Phi(t,z)}{1-pz \Phi(t,z)}.
\end{equation*}
\end{proof}

\noindent
{\bf Proof of Theorem \ref{T-LDPBERF}.} It follows from Lemma \ref{L-GEN} that the function $G(t,z)$ is analytic in $z$, for all $t \in \dR$ and for all $z \in \dC$ such that
$|z| < R_G(t)$. Moreover, one can observe from \eqref{DEFGENF} that $m_{n+2}(t)$ is the coefficient of the Taylor expansion of $G(t,z)$ at point $(t,0)$. Then, we deduce from
\eqref{CALCULATIONGEN} and the Taylor expansion of $G(t,z)$ at the neighborhood of $z^{*}(t)$ that
\begin{equation}
\label{DEVMNT}
m_n(t)=C(t) \big(\lambda(t)\big)^n \big(1+r_n(t)\big)
\qquad \text{with} \qquad \lambda(t)=\frac{1}{z^*(t)}
\end{equation}
where
$$
C(t)=\frac{z^*(t)\Phi(t,z^*(t))}{p\big(\Phi(t, z^*(t)) +z^*(t) \Phi^\prime(t,z^*(t))}
$$
and the remainder $r_n(t)$ goes to zero as $n$ tends to infinity. Hereafter,
let $\Lambda_n$ be the normalized cumulant generating function defined, for all $t \in \dR$ and for all $n \geq 1$, by
\begin{equation}
\label{DEFLAMBDANT}
\Lambda_n(p,t)= \frac{1}{n} \log \big( m_n(t) \big).
\end{equation}
We immediately obtain from \eqref{DEVMNT} that for all $t \in \dR$,
\begin{equation}
\label{LIMLAMBDANT}
\lim_{n \rightarrow \infty} \Lambda_n(p,t)=\Lambda(p,t)
\end{equation}
where $\Lambda(p,t)=\log\big(\lambda(t)\big)$. It exactly means that the limiting function $\Lambda(p,t)$ satisfies the equation
$p \exp\big( -\Lambda(p,t) \big)\Phi\big(t,\exp\big( -\Lambda(p,t) \big)\big)=1$, which can be rewritten as
\begin{equation}
\label{PROOFLAMBDA}
\dE\Big[ F^t_{G+3} \! \exp\big(\!-\!(G+1)\Lambda(p,t)\big) \Big] 
=1.
\end{equation}
where $G$ stands for a random variable with Geometric $\cG(p)$ distribution. The function $\Lambda(p,t)$ is finite for all $t \in \dR$, since $|\Lambda(p,t)| \leq |t| \log(2)$. It is also infinitely differentiable and strictly convex, as seen in Remark \ref{R-LDP}.
Finally, it follows from the G\"{a}rtner-Ellis theorem \cite[Theorem 2.3.6]{DZ2010} that the sequence $(\log(X_n)/n)$ satisfies a large deviation principle with speed $n$ and good rate function given by \eqref{RATEF}, which completes the proof of Theorem \ref{T-LDPBERF}.
\demend

\vspace{-1ex}
\noindent
{\bf Acknowledgments.} The first author would like to thank Jean-Fran\c{c}ois Marckert and Jean-Fran\c{c}ois Quint for fruitful discussions on a preliminary version of the manuscript.

\nocite{*}

\bibliographystyle{acm}
\bibliography{Bernoulli-Fib-2026}

\end{document}